\documentclass{article}

\usepackage{pstricks}




\usepackage{amsmath,amssymb}
\usepackage{amsfonts}
\usepackage{latexsym}
\usepackage{amsthm}

\setlength{\textwidth}{6.5in}
\setlength{\oddsidemargin}{.1in}
\setlength{\evensidemargin}{.1in}

\begin{document}


\newtheorem{thm}{Theorem}[section]
\newtheorem{lem}[thm]{Lemma}
\newtheorem{conj}[thm]{Conjecture}
\newtheorem{prop}[thm]{Proposition}
\newtheorem{cor}[thm]{Corollary}
\newtheorem{sch}[thm]{Scholium}
\newtheorem{guess}[thm]{Guess}
\newtheorem{ex}[thm]{Example}
\theoremstyle{remark}
    \newtheorem*{rem}{{\bf Remark}}   
    \newtheorem*{rems}{{\bf Remarks}}

    \newtheorem*{note}{Note}
\theoremstyle{definition}
    \newtheorem{defn}[thm]{Definition}

\def\ds{\displaystyle}\def\C{{\mathcal C}}
\def\Z {{\mathbb Z}}
\def\AM {{\mathcal A}}
\def\BM {{\mathcal S}}
\def\CM {{\mathcal T}}
\def\K{{\mathcal K}}

\begin{center}
\vskip 1cm{\LARGE\bf {Motzkin numbers of higher rank: Generating
function and explicit expression}} \vskip 1cm
Toufik Mansour, Department of Mathematics, University of Haifa\\
31905 Haifa, Israel\\[4pt]

Matthias Schork\footnote{All correspondence should
be directed to this author.}, Alexanderstrasse 76\\
 60489 Frankfurt, Germany\\[4pt]

Yidong Sun, Department of Mathematics, Dalian Maritime University\\
116026 Dalian, P.R. China\\[4pt]

{\tt toufik@math.haifa.ac.il, mschork@member.ams.org,
sydmath@yahoo.com.cn}
\end{center}

\begin{abstract}
The generating function for the (colored) Motzkin numbers of higher
rank introduced recently is discussed. Considering the special case
of rank one yields the corresponding results for the conventional
colored Motzkin numbers for which in addition a recursion relation
is given. Some explicit expressions are given for the higher rank
case in the first few instances.
\end{abstract}

\vskip .2in

\renewcommand{\theequation}{\arabic{equation}}

\renewcommand{\theequation}{\arabic{equation}}

\section{Introduction}\label{Intro}

The classical Motzkin numbers (A001006 in \cite{OEIS}) count the
numbers of Motzkin paths (and are also related to many other
combinatorial objects, see Stanley \cite{stan}). Let us recall the
definition of Motzkin paths. We consider in the Cartesian plane
$\mathbb{Z}\times \mathbb{Z}$ those lattice paths starting from
$(0,0)$ that use the steps $\{U,L,D\}$, where $U=(1,1)$ is an
up-step, $L=(1,0)$ a level-step and $D=(1,-1)$ a down-step. Let
$M(n,k)$ denote the set of paths beginning in $(0,0)$ and ending in
$(n,k)$ that never go below the $x$-axis. Paths in $M(n,0)$ are
called {\it Motzkin paths} and $m_n:=|M(n,0)|$ is called $n$-{\it th
Motzkin  number}. Sulanke showed \cite{sul} that the Motzkin numbers
satisfy the recursion relation
\begin{equation}\label{mot}
(n+2)m_n=(2n+1)m_{n-1}+3(n-1)m_{n-2}
\end{equation}
and it is a classical fact (see Stanley \cite{stan}) that their
generating function is given by
\begin{equation}\label{genfunc}
\sum_{n\geq 0}m_nx^n=\frac{1-x-\sqrt{1-2x-3x^2}}{2x^2}.
\end{equation}
Those Motzkin paths which have no level-steps are called {\it Dyck
paths} and are enumerated by Catalan numbers (A000108 in
\cite{OEIS}), see Stanley \cite{stan}. In recent times the above
situation has been generalized by introducing colorings of the
paths. For example, the $k$-{\it colored Motzkin paths} have
horizontal steps colored by $k$ colors (see \cite{satsi,satsi2} and
the references given therein). More generally, Woan introduced
\cite{woan,woan2} colors for each type of step. Let us denote by $u$
the number of colors for an up-step $U$, by $l$ the number of colors
for a level-step $L$ and by $d$ the number of colors for a down-step
$D$. (Note that if we normalize the weights as $u+l+d=1$ we can view
the paths as discrete random walks.) One can then introduce the set
$M^{(u,l,d)}(n,0)$ of $(u,l,d)$-{\it colored Motzkin paths} and the
corresponding $(u,l,d)${\it -Motzkin numbers}
$m_n^{(u,l,d)}:=|M^{(u,l,d)}(n,0)|$. Woan has given \cite{woan} a
combinatorial proof that the $(1,l,d)$-Motzkin numbers satisfy the
recursion relation
\begin{equation}\label{rek}
(n+2)m_n^{(1,l,d)}=l(2n+1)m^{(1,l,d)}_{n-1}+(4d-l^2)(n-1)m^{(1,l,d)}_{n-2}.
\end{equation}
Choosing $l=1$ and $d=1$ yields the recursion relation (\ref{mot})
of the conventional Motzkin numbers $m_n\equiv m_n^{(1,1,1)}$. Note
that choosing $(u,l,d)=(1,k,1)$ corresponds to the $k$-colored
Motzkin paths. Defining $m_{k,n}:=|M^{(1,k,1)}(n,0)|$, one obtains
from (\ref{rek}) the recursion relation
$(n+2)m_{k,n}=k(2n+1)m_{k,n-1}+(4-k^2)(n-1)m_{k,n-2}$ for the number
of $k$-colored Motzkin paths. Sapounakis and Tsikouras derived
\cite{satsi} the following generating function for $m_{k,n}$:
\begin{equation}\label{kgenfunc}
\sum_{n\geq 0}m_{k,n}x^n=\frac{1-kx-\sqrt{(1-kx)^2-4x^2}}{2x^2}.
\end{equation}
For $k=1$ this identity reduces to (\ref{genfunc}) for the
conventional Motzkin numbers $m_n\equiv m_{1,n}$. One of the present
authors suggested \cite{ich3} (as ``Problem 1'') that it would be
interesting to find the recursion relation and generating function
for the general $(u,l,d)$-Motzkin numbers $m_n^{(u,l,d)}$. We will
prove in Theorem \ref{rankone} thet $m_n^{(u,l,d)}=m_n^{(1,l,ud)}$,
yielding the desired recursion relation. Furthermore, a generating
function and an explicit expression is derived for $m_n^{(u,l,d)}$.
In \cite{ich3} it was furthermore suggested to find the recursion
relation and the generating function for the (colored) Motzkin
numbers of higher rank (``Problem 2''). These numbers were
introduced by Schork \cite{ich} in the context of ``duality triads
of higher rank'' (and have also been considered before as
``excursions'', see, e.g., \cite{Band}). In view of this connection
Schork conjectured \cite{ich3} that the Motzkin numbers of rank $r$
satisfy a recursion relation of order $2r+1$. Very recently,
Prodinger was the first to observe \cite{prod} that this conjecture
does not hold by discussing explicitly the case $r=2$. In
particular, already for the first nontrivial cases $r=2,3$ the
relations involved become very cumbersome. We will describe in
Theorem \ref{thmm1} the generating function for the Motzkin numbers
of higher rank and discuss then several particular cases explicitly.

\section{Recursion relation and generating function for the general (colored) Motzkin numbers}

\begin{thm}\label{rankone}
The general $(u,l,d)$-Motzkin numbers satisfy the recursion relation
\begin{equation}\label{recone}
(n+2)m_n^{(u,l,d)}=l(2n+1)m^{(u,l,d)}_{n-1}+(4ud-l^2)(n-1)m^{(u,l,d)}_{n-2}.
\end{equation}
A generating function is given by
\begin{equation}\label{genone}
\sum_{n\geq0}m_n^{(u,l,d)}x^n=\frac{1-lx-\sqrt{(1-lx)^2-4udx^2}}{2udx^2},
\end{equation}
implying the explicit expression
\begin{equation}\label{exone}
m_n^{(u,l,d)}=\sum_{j=0}^{\frac{n}{2}}\frac{1}{j+1}\binom{2j}{j}\binom{n}{2j}u^jd^jl^{n-2j}.
\end{equation}
\end{thm}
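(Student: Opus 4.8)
The plan is to exploit the elementary observation, flagged in the introduction, that $m_n^{(u,l,d)}$ depends on $u$ and $d$ only through the product $ud$. First I would classify the paths in $M^{(u,l,d)}(n,0)$ by their number $j$ of up-steps. Since such a path returns to the $x$-axis, its number of down-steps is also $j$ and its number of level-steps is $n-2j$; hence each path carries the color weight $u^jd^jl^{n-2j}$. Writing $p(n,j)$ for the number of \emph{uncolored} Motzkin paths of length $n$ with exactly $j$ up-steps, this gives
\begin{equation}
m_n^{(u,l,d)}=\sum_{j=0}^{\lfloor n/2\rfloor}p(n,j)\,u^jd^jl^{n-2j},
\end{equation}
which manifestly equals $m_n^{(1,l,ud)}$. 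The recursion \eqref{recone} then follows at once from Woan's recursion \eqref{rek} by the substitution $d\mapsto ud$.

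For the generating function I would set $F(x)=\sum_{n\ge0}m_n^{(u,l,d)}x^n$ and use a first-return decomposition. A nonempty $(u,l,d)$-colored Motzkin path either begins with a level-step (contributing a factor $lx$) followed by an arbitrary such path, or begins with an up-step (factor $ux$), followed by an arbitrary path raised by one unit, then the down-step of its first return to the axis (factor $dx$), and finally an arbitrary path. This yields the functional equation
\begin{equation}
F=1+lxF+udx^2F^2 .
\end{equation}
Solving this quadratic and selecting the root with $F(0)=1$ produces \eqref{genone}; a short Taylor expansion near $x=0$ shows that the opposite sign gives a function singular at the origin and must be discarded.

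Finally, to obtain \eqref{exone} I would evaluate $p(n,j)$ directly. The $2j$ non-level steps may occupy any of $\binom{n}{2j}$ sets of positions, and—because inserting a level-step never alters the height—the path stays weakly above the axis exactly when the chosen subsequence of up- and down-steps forms a Dyck path, of which there are the Catalan number $C_j=\frac{1}{j+1}\binom{2j}{j}$. Hence $p(n,j)=\binom{n}{2j}C_j$, and substituting this into the first display gives \eqref{exone}. The step I expect to demand the most care is precisely this decoupling claim: one must argue that admissibility of the path is controlled solely by its up/down subsequence and is independent of where the level-steps are interleaved. As an alternative that sidesteps the combinatorial argument, \eqref{exone} can instead be recovered from \eqref{genone} by applying Lagrange inversion to the functional equation, at the cost of a routine coefficient extraction.
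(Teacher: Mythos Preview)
Your proposal is correct. The first-return decomposition you give for the generating function is exactly the paper's argument, but the other two parts differ in route.

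For the identity $m_n^{(u,l,d)}=m_n^{(1,l,ud)}$, the paper matches each up-step $U$ with the first down-step $D$ to its right at the same height, then recolors each such pair $(U,D)$ from the $u\times d$ grid of colors to a single color among $ud$ alternatives, producing an explicit bijection to $(1,l,ud)$-paths. Your argument is more direct: stratifying by the number $j$ of up-steps makes it evident that the total weight depends on $u,d$ only through $(ud)^j$, with no need to construct a bijection. For the explicit expression, the paper proceeds analytically, rewriting the generating function as $\frac{1}{1-lx}\,C\!\left(\frac{udx^2}{(1-lx)^2}\right)$ and expanding the Catalan generating function $C$ to extract coefficients; you instead determine $p(n,j)=\binom{n}{2j}C_j$ combinatorially by separating the choice of positions for non-level steps from the Dyck constraint on the up/down subsequence. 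Your ``decoupling'' concern is easily settled, since the height after any prefix is simply the number of $U$'s minus the number of $D$'s seen so far and is unaffected by level-steps. The paper's route has the minor advantage of deriving \eqref{exone} as a formal consequence of \eqref{genone}, while yours yields \eqref{exone} independently and more transparently.
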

\begin{proof}
Let us prove that $m_n^{(u,l,d)}=m_n^{(1,l,ud)}$ for all $n\geq0$.
In order to see that, let $U$ be any up-step in a Motzkin path; we
call $(U,D)$ a {\em pair} if the down-step $D$ is the first
down-step on the right-hand side of $U$ which has the same height as
$U$. The set of pairs of a Motzkin path is uniquely determined and
for each pair exist $ud$ possible combinations of colorings. The
same number of possible colorings result if the up-steps $u$ are
always colored white (i.e., $u=1$) and if each down-step can be
colored by $ud$ colors (which we call ``alternative colors'' to
distinguish them from the original colors). Thus, given a
$(u,l,d)$-Motzkin path we may replace the colors $(u_j,d_j)$ (with
$1\leq u_j\leq u$ and $1\leq d_j\leq d$) for the $j$-th pair $(U,D)$
by the combination of alternative colors $(1,c_j$) where $c_j$ is
the $([u_j-1]d+d_j)$-th alternative color (with $1\leq c_j \leq
ud$). Replacing the colors of all pairs in this fashion by the
alternative colors yields a $(1,l,ud)$-Motzkin path. Thus, we have
constructed a bijection between the set of $(u,l,d)$-Motzkin paths
of length $n$ and set of $(1,l,ud)$-Motzkin paths of length $n$,
thereby showing that $m_n^{(u,l,d)}=m_n^{(1,l,ud)}$. From this
identity and (\ref{rek}) we immediately obtain (\ref{recone}). An
equation for the generating function
$M_{(u,l,d)}(x):=\sum_{n\geq0}m_n^{(u,l,d)}x^n$ is obtained from the
``first return decomposition'' of a nonempty $(u,l,d)$-Motzkin path
$M$: either $M= LM'$ or $M=UM'DM''$, where $M',M''$ are
$(u,l,d)$-Motzkin paths. The two possibilities give the
contributions $lxM_{(u,l,d)}(x)$ and  $udx^2(M_{(u,l,d)}(x))^2$.
Hence, $M_{(u,l,d)}(x)$ satisfies
\[M_{(u,l,d)}(x)=1+lxM_{(u,l,d)}(x)+udx^2(M_{(u,l,d)}(x))^2,\]
yielding
\[ M_{(u,l,d)}(x)=\frac{1-lx-\sqrt{(1-lx)^2-4udx^2}}{2udx^2}\]
which is the asserted equation (\ref{genone}) for the generating
function. Note that this may also be written as
\[M_{(u,l,d)}(x)=\frac{1}{1-lx}C\left(\frac{udx^2}{(1-lx)^2}\right),\]
where
\[C(y)=\frac{1-\sqrt{1-4y}}{2y}=\sum_{n\geq0}\frac{1}{n+1}\binom{2n}{n}y^n\]
is the generating function for the Catalan numbers (see Stanley
\cite{stan}). Thus,
\[M_{(u,l,d)}(x)=\sum_{j\geq0}\frac{1}{j+1}\binom{2j}{j}\frac{u^jd^jx^{2j}}{(1-lx)^{2j+1}}.\]
Recalling $M_{(u,l,d)}(x)=\sum_{n}m_n^{(u,l,d)}x^n$, a comparison of
coefficients shows that the number of $(u,l,d)$-Motzkin paths of
length $n$ is given by (\ref{exone}).
\end{proof}

\section{Generating function for the general (colored) Motzkin numbers of higher rank}
We will now generalize the situation considered in the previous
section to the case of higher rank. One of the present authors
discussed \cite{ich} in the context of duality triads of higher rank
(where one considers recursion relations of higher order, or
equivalently, orthogonal matrix polynomials \cite{ich2}) why it is
interesting to consider the situation where the steps of the paths
can go up or down more than one unit. The maximum number of units
which a single step can go up or down will be called the rank. More
precisely, let $r\geq 1$ be a natural number. The set of {\it
admissable} steps consists of:
\begin{enumerate}
\item $r$ types of up-steps $U_j=(1,j)$ with weights $u_j$ for $1\leq j \leq r$.
\item A level-step $L=(1,0)$ with weight $l$.
\item $r$ types of down-steps $D_j=(1,-j)$ with weights $d_j$ for $1\leq j \leq r$.
\end{enumerate}
In the following we write $({\bf u},l,{\bf
d}):=(u_r,\ldots,u_1,l,d_1,\ldots,d_r)$ for the vector of weights.
\begin{defn}\cite{ich}
The set $M^{({\bf u},l,{\bf d})}(n,0)$ of $({\bf u},l,{\bf d})${\it
-colored Motzkin paths of rank r and length $n$} is the set of paths
which start in $(0,0)$, end in $(n,0)$, have only admissable steps
and are never below the $x$-axis. The corresponding number of paths,
$m_n^{({\bf u},l,{\bf d})}:=|M^{({\bf u},l,{\bf d})}(n,0)|$, will be
called $({\bf u},l,{\bf d})$-{\it Motzkin number of rank $r$}.
\end{defn}

Motzking paths of higher rank were considered in the literature
already before \cite{ich} under the name ``excursions'', see, e.g.,
\cite{Band}. In \cite{ich3} it is discussed how one may associate
with each Motzkin path of rank $r$ and length $n$ a conventional
Motzkin path of length $rn$ in a straightforward fashion. However,
it was also discussed that this association is no bijection and that
the case of higher rank is more subtle. In the following we derive
an equation that the generating function for the Motzkin numbers of
higher rank satisfies. In order to do that we need the following
notations. We denote the set of all $({\bf u},l,{\bf d})$-Motzkin
paths of rank $r$ that start at height $s$ and end at height $t$
(and have only admissable steps and are never below the $x$-axis) by
$\AM_{s,t}$, and we denote the subset of paths of length $n$ by
$\AM_{s,t}(n)$. Define
$$A_{s,t}\equiv A_{s,t}(x):=\sum_{n\geq0}|\AM_{s,t}(n)|x^n.$$
We extend this notation by defining $A_{s,t}=0$ for all $s<0$ or
$t<0$. For $s,t\geq 0$ we denote the subset of paths in $\AM_{s,t}$
that never touch the $x$-axis by $\AM_{s,t}^{\ast}$. In the
following we make several times use of the relation
$\AM_{s,t}^{\ast}\simeq \AM_{s-1,t-1}$, or,
$|\AM_{s,t}^{\ast}(n)|=|\AM_{s-1,t-1}(n)|$, implying
\begin{equation}\label{irr}
A_{s,t}^{\ast}(x)
:=\sum_{n\geq0}|\AM_{s,t}^{\ast}(n)|x^n=A_{s-1,t-1}(x).
\end{equation}

\begin{thm}\label{thmm1}
The generating function $A_{0,0}\equiv M_{({\bf u},l,{\bf
d})}(x)=\sum_{n\geq 0}m_n^{({\bf u},l,{\bf d})}x^n$ for the general
$({\bf u},l,{\bf d})$-Motzkin numbers of rank $r$ satisfies
\begin{equation}\label{genfunct}
A_{0,0}=1+lxA_{0,0}+x^2A_{0,0}\sum_{p=1}^r\sum_{q=1}^ru_pd_qA_{p-1,q-1},
\end{equation}
where the generating functions $A_{i,j}$ with $1\leq i\leq r-1$ and
$0\leq j\leq r-1$ satisfy
$$A_{i,j}=A_{i-1,j-1}+xA_{0,j}\sum_{q=1}^rd_qA_{i-1,q-1},$$
and for all $1\leq j\leq r-1$,
$$A_{0,j}=xA_{0,0}\sum_{p=1}^ru_pA_{p-1,j-1}.$$
\end{thm}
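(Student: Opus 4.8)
The plan is to derive all three functional equations from the same structural idea used in Theorem~\ref{rankone}: decompose each path according to its \emph{first return} to a relevant height, and translate each decomposition into a generating-function identity using the shift relation~(\ref{irr}). The three equations are of increasing generality, and I would prove them in the order $A_{0,j}$, then $A_{i,j}$, then $A_{0,0}$, since the later ones invoke the earlier.

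First I would treat $A_{0,j}$ for $1 \le j \le r-1$. A path in $\AM_{0,j}$ is nonempty (since $j>0$), so consider its first step. If it is a level-step there is nothing forcing an ascent yet, but the cleanest decomposition is to look at the \emph{first return to the $x$-axis after the initial ascent}. Concretely, since the path starts at height $0$ and must eventually reach height $j$, I would decompose at the first up-step leaving the $x$-axis: the path begins with an arbitrary $\AM_{0,0}$-portion, then takes an up-step $U_p$ (weight $u_p$, contributing $x$) raising the height to $p$, and the remainder is a path from height $p$ to height $j$ that never returns to the axis, i.e.\ an element of $\AM_{p,j}^{\ast} \simeq \AM_{p-1,j-1}$ by~(\ref{irr}). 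Summing over $p$ and using $A_{p,j}^{\ast} = A_{p-1,j-1}$ yields exactly $A_{0,j} = x A_{0,0} \sum_{p=1}^r u_p A_{p-1,j-1}$. The weight $x$ accounts for the single up-step, and $A_{0,0}$ collects the initial excursion at ground level.

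Next, for $A_{i,j}$ with $1 \le i \le r-1$ and $0 \le j \le r-1$, the starting height $i$ is positive, so I would split according to whether the path ever touches the $x$-axis. If it never touches the axis, it contributes $\AM_{i,j}^{\ast} \simeq \AM_{i-1,j-1}$, giving the term $A_{i-1,j-1}$. If it does touch, I would decompose at the first down-step that brings the path to the $x$-axis: the path runs from height $i$ to height $0$ without touching the axis earlier, then takes a down-step $D_q$ (weight $d_q$, factor $x$) from some height $q$ to $0$, and finishes as an arbitrary $\AM_{0,j}$-path. The ``first descent to the axis'' part, from height $i$ down to height $q$ staying strictly above $0$, lies in $\AM_{i,q}^{\ast} \simeq \AM_{i-1,q-1}$. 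Summing over $q$ gives $x A_{0,j} \sum_{q=1}^r d_q A_{i-1,q-1}$, and combining the two cases produces the stated equation. Finally, $A_{0,0}$ follows from the full first-return decomposition exactly as in Theorem~\ref{rankone}: an empty path (the $1$), a level-step followed by an $\AM_{0,0}$-path (the $lxA_{0,0}$), or an up-step $U_p$, an excursion strictly above the axis from height $p$ back to height $q$, a down-step $D_q$ to the axis, and a final $\AM_{0,0}$-path. The middle excursion is in $\AM_{p,q}^{\ast} \simeq \AM_{p-1,q-1}$, yielding $x^2 A_{0,0} \sum_{p,q} u_p d_q A_{p-1,q-1}$.

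The main obstacle, and the step demanding the most care, is getting the decompositions to be genuine bijections rather than overcounting or undercounting paths. In each case one must verify that the chosen marked step (first up-step off the axis, first descent to the axis, or first return) is \emph{uniquely determined} by the path, so that the factorization is one-to-one, and that the two sub-paths range over exactly the claimed sets with no interaction. I would need to be especially careful that in the $A_{i,j}$ and $A_{0,0}$ decompositions the ``strictly above the axis'' portions really correspond to $\AM^{\ast}$ and hence shift down by one via~(\ref{irr}), and that the convention $A_{s,t}=0$ for negative indices correctly suppresses impossible steps (for instance an up-step $U_p$ with $p$ exceeding the available room, or a descent to a negative height). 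Once the bijectivity of each decomposition is established, the translation to generating functions is routine bookkeeping of the $x$-weights and color weights $u_p,d_q,l$.
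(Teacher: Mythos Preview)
Your approach is essentially the paper's: the same three decompositions (first return for $A_{0,0}$, touch/no-touch split with first descent to the axis for $A_{i,j}$, and a marked up-step from height~$0$ for $A_{0,j}$), translated via~(\ref{irr}). One genuine slip to fix: in the $A_{0,j}$ case you label the marked step the \emph{first} up-step leaving the $x$-axis, but the pieces you describe---an arbitrary $\AM_{0,0}$-prefix, then $U_p$, then a remainder in $\AM_{p,j}^{\ast}$---force that step to be the \emph{last} up-step from height~$0$. If it were truly the first, the prefix could contain only level-steps (not an arbitrary excursion), and the suffix would not be constrained to stay strictly positive, so the bijection would fail. The paper explicitly marks the last up-step from height~$0$; once you make that correction your argument and the paper's coincide.
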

\begin{proof}
From the definitions, a nonempty $({\bf u},l,{\bf d})$-Motzkin path
$M$ can start either by a level step $L$ or an up-step $U_p$ with
$1\leq p \leq r$.
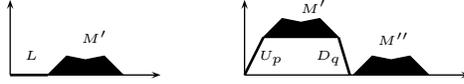
\begin{figure}[ht]
\begin{center}
\begin{pspicture}(3,1)
\psline[linewidth=.5pt]{->}(0,0)(2,0)\psline[linewidth=.5pt]{->}(0,0)(0,1)
\psline[linewidth=1pt](0,0)(.5,0)
\put(0.25,-.25){\psline[linewidth=.5pt,fillstyle=solid,fillcolor=black](1.25,.25)(.25,.25)(.5,.5)(.75,.45)(1,.5)(1.25,.25)}
\put(0.2,0.2){\tiny $L$}\put(0.95,0.43){\tiny $M'$}
\end{pspicture}
\begin{pspicture}(3,1)
\psline[linewidth=.5pt]{->}(0,0)(3,0)\psline[linewidth=.5pt]{->}(0,0)(0,1)
\psline[linewidth=1pt](0,0)(.25,.5)\psline[linewidth=1pt](1.25,.5)(1.4,0)
\put(0,.25){\psline[linewidth=.5pt,fillstyle=solid,fillcolor=black](1.25,.25)(.25,.25)(.5,.5)(.75,.45)(1,.5)(1.25,.25)}
\put(1.2,-.25){\psline[linewidth=.5pt,fillstyle=solid,fillcolor=black](1.25,.25)(.25,.25)(.5,.5)(.75,.45)(1,.5)(1.25,.25)}
\put(0.2,0.2){\tiny $U_p$}\put(.95,0.2){\tiny $D_q$}
\put(0.75,0.83){\tiny $M'$}\put(1.77,0.4){\tiny $M''$}
\end{pspicture}
\caption{First return decomposition of a Motzkin path of rank
$r$.}\label{ffig0}
\end{center}
\end{figure}
In the case the path starts by an up-step $U_p$, we use the ``first
return decomposition`'' of $M$, i.e., we write $M$ as
$M=U_pM'D_qM''$ where $M''$ is an arbitrary $({\bf u},l,{\bf
d})$-Motzkin path and $M'\in\AM_{p,q}$ such that $M'$ does not touch
the height zero, see Figure~\ref{ffig0}. Thus, $M' \in
\AM_{p,q}^{\ast}$ and the generating function $A_{0,0}$ satisfies
\[A_{0,0}=1+lxA_{0,0}+x^2A_{0,0}\sum_{p=1}^r\sum_{q=1}^ru_pd_qA_{p-1,q-1}
\]
where we have used (\ref{irr}). This shows (\ref{genfunct}). Now,
let us write an equation for the generating function $A_{i,j}$ (with
$1\leq i\leq r-1$ and $0\leq j\leq r-1$) for the number of $({\bf
u},l,{\bf d})$-Motzkin paths $Q_{ij}=U_iP_{ij}D_j$ where $P_{ij}\in
\AM_{i,j}$. In order to do that, we use the first return
decomposition of such paths:
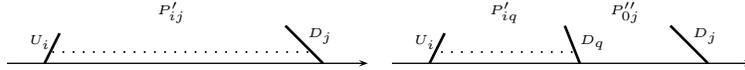
\begin{figure}[ht]
\begin{center}
\begin{pspicture}(5,1)
\psline[linewidth=.5pt]{->}(-.5,0)(4.3,0)
\psline[linewidth=1pt](0,0)(.2,.4)
\put(-1.5,0){\psline[linewidth=1pt](4.7,.5)(5.2,0)}
\put(-.2,0.25){\tiny$U_i$}\put(3.5,.35){\tiny$D_{j}$}\put(1.5,.65){\tiny$P'_{ij}$}
\psline[linestyle=dotted](.1,.15)(3.5,.15)
\end{pspicture}
\begin{pspicture}(5,1)
\psline[linewidth=.5pt]{->}(-.5,0)(4.3,0)
\psline[linewidth=1pt](0,0)(.2,.4)
\put(-1.5,0){\psline[linewidth=1pt](4.7,.5)(5.2,0)}
\put(2,0){\psline[linewidth=1pt](0,0)(-.2,.5)}
\put(-.2,0.25){\tiny$U_i$}\put(3.5,.35){\tiny$D_{j}$}\put(2,.25){\tiny$D_{q}$}
\psline[linestyle=dotted](.15,.15)(1.9,.15)\put(.8,.65){\tiny$P'_{iq}$}\put(2.4,.65){\tiny$P''_{0j}$}
\end{pspicture}
\caption{The case $1\leq i\leq r-1$ and $0\leq j\leq
r-1$.}\label{ffig1}
\end{center}
\end{figure}
either $Q_{ij}=U_iP'_{ij}D_j$ such that $P'_{ij}$ does not touch the
height zero (i.e., $P'_{ij}\in \AM_{i,j}^{\ast}$), or
$Q_{ij}=U_iP'_{iq}D_qP''_{0j}D_j$ such that $D_q$ is the first
down-step that touches the height zero (thus, $P'_{iq}$ does not
touch the height zero, i.e., $P'_{iq}\in \AM_{i,q}^{\ast}$) as
described in Figure~\ref{ffig1}. Using (\ref{irr}), the generating
function $A_{i,j}$ thus satisfies
             $$A_{i,j}=A_{i-1,j-1}+xA_{0,j}\sum_{q=1}^rd_qA_{i-1,q-1},$$
as claimed.

In order to write an equation for the generating function $A_{0,j}$
(with $0\leq j\leq r-1$), let us consider the last up-step $U_p$
from height zero (there must exist at least one such up-step). In
this case each $({\bf u},l,{\bf d})$-Motzkin
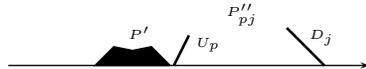
\begin{figure}[ht]
\begin{center}
\begin{pspicture}(5,1)
\psline[linewidth=.5pt]{->}(-.5,0)(4.3,0)
\put(-1.5,0){\psline[linewidth=1pt](4.7,.5)(5.2,0)}
\put(1.7,0){\psline[linewidth=1pt](0,0)(.2,.4)}
\put(3.5,.35){\tiny$D_{j}$}\put(2,.25){\tiny$U_{p}$}
\put(2.4,.65){\tiny$P''_{pj}$}\put(1.1,.35){\tiny$P'$}
\put(0.4,-.25){\psline[linewidth=.5pt,fillstyle=solid,fillcolor=black](1.25,.25)(.25,.25)(.5,.5)(.75,.45)(1,.5)(1.25,.25)}
\end{pspicture}
\caption{The case $i=0$ and $1\leq j\leq r-1$.}\label{ffig2}
\end{center}
\end{figure}
path can be decomposed as $P'U_pP''_{pj}D_j$, where $P'$ is an
arbitrary $({\bf u},l,{\bf d})$-Motzkin path and
$P''_{pj}\in\AM_{p,j}$ such that it does not touch the height zero
(i.e., $P''_{pj}\in\AM_{p,j}^{\ast}$), as described in
Figure~\ref{ffig2}. Thus, using again (\ref{irr}), the generating
function $A_{0,j}$ satisfies
$$A_{0,j}=xA_{0,0}\sum_{p=1}^ru_pA_{p-1,j-1},$$
which completes the proof.
\end{proof}

\begin{ex}
Let us consider as an example the case $r=1$. It follows from
Theorem~\ref{thmm1} that
\begin{equation}\label{rone}
0=1+(lx-1)A_{0,0}+u_1d_1x^2A_{0,0}^2,
\end{equation}
yielding
$$A_{0,0}\equiv M_{(u_1,l,d_1)}(x)=\frac{1-lx-\sqrt{(1-lx)^2-4u_1d_1x^2}}{2u_1d_1x^2}$$
as described in the proof of Theorem \ref{rankone}. This implies -
according to \eqref{exone} - that
$$m_n^{(u_1,l,d_1)}=\sum_{j=0}^{n/2}\frac{1}{1+j}\binom{2j}{j}\binom{n}{2j}l^{n-2j}(u_1d_1)^{j}.$$
\end{ex}

\begin{ex}\label{exr1}
As another example, it follows from Theorem \ref{thmm1} that one has
for $r=2$ that
$$\left\{\begin{array}{rl}
A_{0,0}&=1+lxA_{0,0}+x^2A_{0,0}(u_1d_1A_{0,0}+u_1d_2A_{0,1}+u_2d_1A_{1,0}+u_2d_2A_{1,1}),\\
A_{0,1}&=xA_{0,0}(u_1A_{0,0}+u_2A_{1,0}),\\
A_{1,0}&=xA_{0,0}(d_1A_{0,0}+d_2A_{0,1}),\\
A_{1,1}&=A_{0,0}+xA_{0,1}(d_1A_{0,0}+d_2A_{0,1}).\\
\end{array}\right.$$
Solving the above system of equations we obtain that
\begin{eqnarray*}
0&=& 1+(lx-1)A_{0,0}-x^2(d_2u_2-d_1u_1)A_{0,0}^2+x^2(xu_1^2d_2+xu_2d_1^2-2xlu_2d_2+2d_2u_2)A_{0,0}^3\\
&&-u_2d_2x^4(d_2u_2-d_1u_1)A_{0,0}^4+x^4u_2^2d_2^2(lx-1)A_{0,0}^5+x^6u_2^3d_2^3A_{0,0}^6.
\end{eqnarray*}
Note that, if we set $u_2=d_2=0$ in the above expression then we get
\eqref{rone}. It is also interesting to consider the case $({\bf
u},l,{\bf d})=({\bf 1},1,{\bf 1})$ of non-colored Motzkin paths of
rank 2, representing the most natural generalization of the
conventional Motzkin paths. The above equation reduces in this case
to
\begin{eqnarray*}
0&=&1+(x-1)A_{0,0}
+2x^2A_{0,0}^3+x^4(x-1)A_{0,0}^5+x^6A_{0,0}^6\\
&=&(1+xA_{0,0})^2(1-(x+1)A_{0,0}+x(x+2)A_{0,0}^2-x^2(x+1)A_{0,0}^3
+x^4A_{0,0}^4).
\end{eqnarray*}
Setting $A_{0,0}\equiv M_{({\bf 1},1,{\bf 1})}(x)$ and using that
$A_{0,0}$ is a formal power series (thus,
$A_{0,0}\neq-\frac{1}{x}$), this is equivalent to
$$0=1-(x+1)M_{({\bf 1},1,{\bf 1})}(x)+x(x+2)M_{({\bf 1},1,{\bf 1})}^2(x)-x^2(x+1)M_{({\bf 1},1,{\bf 1})}^3(x)
+x^4M_{({\bf 1},1,{\bf 1})}^4(x),$$ as described already in
\cite{Band,prod}. To obtain a recursion relation for $m_n\equiv
m_n^{({\bf 1},1,{\bf 1})}$, one can use the MAPLE program package
{\rm gfun} written by Salvy et al. \cite{savy}. Prodinger has done
this \cite{prod} to obtain {\small$$\begin{array}{l} 625(n + 3)(n +
2)(n + 1)m_n - 125(n + 3)(n + 2)(7n + 27)m_{n+1}-50(n + 3)(5n^2 +
24n + 23)m_{n+2}\\ + (41890 + 30860n + 7540n^2 +
610n^3)m_{n+3}-(6844 + 5151n + 1214n^2 + 91n^3)m_{n+4}\\ \qquad- (n
+ 7)(23n^2 + 301n + 976)m_{n+5} + 2(2n + 13)(n + 8)(n + 7)m_{n+6} =
0
\end{array}$$}
and mentions that Salvy has informed him that this recursion of
order $6$ is minimal. Thus, the Motzkin numbers of rank $r=2$
satisfy a $7$-term recursion relation and not a $2\cdot2+1=5$-term
relation as the conjecture of Schork \cite{ich3} implies. Thus, the
conjecture does not hold! This was first observed by Prodinger
\cite{prod}. The first few values of $m_n\equiv m_n^{({\bf 1},1,{\bf
1})}$ are given in Table \ref{T1}; this sequence is sequence A104184
in \cite{OEIS}.

\renewcommand{\arraystretch}{1.5}
\begin{table}[ht]
\begin{center}
\begin{tabular}{|r|r|r|r|r|r|r|r|r|r|r|r|}\hline
      & $n=1$&2 & 3&4&5&6&7&8&9&10\\ \hline \hline
    $m_n^{({\bf 1},1,{\bf 1})}$ & 1  & 3 & 9& 32 & 120& 473& 1925 & 8034&34188&147787 \\ \hline
     \end{tabular}
\caption{The first few values of the non-colored Motzkin numbers of
rank two.}\label{T1}
\end{center}
\end{table}
\end{ex}

The above example shows that the general case (where the weights are
not restricted to $1$) seems to be extremely complicated (see
Theorem~\ref{thmm1}). Thus, from now on let us consider the case
where all weights are equal to $1$, i.e., $u_i=l=d_i=1$ for $1\leq i
\leq r$. In this case we denote the set of paths by ${\cal B}_{s,t}$
or ${\cal B}_{s,t}(n)$ and the generating function by $B_{i,j}$
(instead of $A_{i,j}$). Then one has
\begin{equation}\label{smm}
B_{s,t}=B_{t,s}
\end{equation}
which can be easily seen since to each path $ P\in {\cal
B}_{s,t}(n)$ one can associate a path $P' \in {\cal B}_{t,s}(n)$ by
traversing $P$ in opposite direction. Since this is clearly a
bijection the above equation follows. Theorem \ref{thmm1} shows that
one has to solve in the general case a system of $r^2$ equations for
the $r^2$ unknowns $A_{i,j}$. Due to the symmetry (\ref{smm}) this
reduces in the case where all weights are equal to 1 to a system of
$\frac{r(r+1)}{2}$ equations in the $\frac{r(r+1)}{2}$ unknowns
$B_{i,j}$ (where $r-1\geq i\geq j\geq 0$).

\begin{ex} Theorem \ref{thmm1} with \eqref{smm} gives for $r=3$ the following set of $\frac{3\cdot 4}{2}=6$ equations
$$\left\{\begin{array}{rl}
B_{0,0}&=1+xB_{0,0}+x^2B_{0,0}(B_{0,0}+2B_{1,0}+2B_{2,0}+2B_{2,1}+B_{1,1}+B_{2,2}),\\
B_{1,0}&=xB_{0,0}(B_{0,0}+B_{1,0}+B_{2,0}),\\
B_{2,0}&=xB_{0,0}(B_{1,0}+B_{1,1}+B_{2,1}),\\
B_{1,1}&=B_{0,0}+xB_{1,0}(B_{0,0}+B_{1,0}+B_{2,0}),\\
B_{2,1}&=B_{1,0}+xB_{1,0}(B_{1,0}+B_{1,1}+B_{2,1}),\\
B_{2,2}&=B_{1,1}+xB_{2,0}(B_{1,0}+B_{1,1}+B_{2,1}).
\end{array}\right.$$
Solving the above system of equations we obtain that the generating
function $B_{0,0}\equiv M_{({\bf 1},1,{\bf 1})}(x)$ for the
non-colored Motzkin numbers of rank 3 satisfies
$$0=1-(1+x)B_{0,0}+2xB_{0,0}^2+x^2(1-2x)B_{0,0}^4+2x^5B_{0,0}^6-x^6(1+x)B_{0,0}^7+x^8B_{0,0}^8.$$
Now, we would like to give a recursion relation for the sequence
$m_n^{({\bf 1},1,{\bf 1})}$. This can be automatically done with
MAPLE's program {\rm gfun}: The procedure ``{\rm algeqtodiffeq}''
translates the (algebraic) equation for $B_{0,0}\equiv M_{({\bf
1},1,{\bf 1})}(x)$ into an equivalent differential equation of order
$7$ (it is too long to present here) and then the procedure ``{\rm
diffeqtorec}'' translates the differential equation into a $28$-term
recursion relation. The first few values of $m_n^{({\bf 1},1,{\bf
1})}$ are given in Table \ref{T2}; this sequence seems not to be
listed in \cite{OEIS}.
\renewcommand{\arraystretch}{1.5}
\begin{table}[ht]
\begin{center}
\begin{tabular}{|r|r|r|r|r|r|r|r|r|r|r|r|}\hline
      & $n=1$&2 & 3&4&5&6&7&8&9&10\\ \hline \hline
    $m_n^{({\bf 1},1,{\bf 1})}$ & 1  & 4 & 16& 78 & 404 & 2208& 12492 &72589& 430569& 2596471\\ \hline
     \end{tabular}
\caption{The first few values of the non-colored Motzkin numbers of
rank three.}\label{T2}
\end{center}
\end{table}
\end{ex}

\begin{ex} Theorem \ref{thmm1} with \eqref{smm} gives for $r=4$ a set of $\frac{4\cdot 5}{2}=10$ equations for the $B_{i,j}$. As above, it is possible to solve this system and obtain that the generating
function $B_{0,0}\equiv M_{({\bf 1},1,{\bf 1})}(x)$ for the
non-colored Motzkin numbers of rank 4 satisfies the following
equation
\begin{eqnarray*}
0&=&1+(x-1)B_{0,0}-2xB_{0,0}^2-x(x+2)(x-1)B_{0,0}^3-x^2(x-2)(x+2)B_{0,0}^4\\
&&+x^2(x-1)B_{0,0}^5+x^3(x-2)(x+1)^2B_{0,0}^6 +x^4(x+1)(x-1)^2B_{0,0}^7-x^5(2x^2-3x-4)B_{0,0}^8\\
&&+x^6(x+1)(x-1)^2B_{0,0}^9+x^7(x-2)(x+1)^2B_{0,0}^{10}+x^8(x-1)B_{0,0}^{11}-x^{10}(x-2)(x+2)B_{0,0}^{12}\\
&&-x^{11}(x+2)(x-1)B_{0,0}^{13}-2x^{13}B_{0,0}^{14}+x^{14}(x-1)B_{0,0}^{15}+x^{16}B_{0,0}^{16}.
\end{eqnarray*}
The first few values of the corresponding Motzkin numbers
$m_n^{({\bf 1},1,{\bf 1})}$ of rank 4 are given in Table \ref{T3};
this sequence seems not to be listed in \cite{OEIS}.
\renewcommand{\arraystretch}{1.5} \begin{table}[ht] \begin{center}
\begin{tabular}{|r|r|r|r|r|r|r|r|r|r|r|r|}\hline
& $n=1$&2 & 3&4&5&6&7&8&9&10\\ \hline \hline $m_n^{({\bf 1},1,{\bf
1})}$ & 1 & 5 & 25& 155 & 1025 & 7167& 51945 &387000& 2944860&
22791189\\ \hline
\end{tabular} \caption{The first few values of the non-colored Motzkin numbers of
rank four.}\label{T3} \end{center}
\end{table}
\end{ex}

\begin{rem}
Let us consider the non-colored Motzkin numbers of rank $r$ (i.e.,
all weights are equal to 1). They satisfy a $\tau(r)$-term recursion
relation where $\tau(r)$ is defined by this property (and is
minimal). This yields a well-defined sequence $\{\tau(r)\}_{r \in
\mathbb{N}}$ starting - according to the above examples - with
$3,7,28,\ldots$. The original conjecture $\tau(r)=2r+1$ proved to be
too naive, but it seems that an exact formula for $\tau(r)$ will be
very difficult to obtain. Thus, approximations or bounds would also
be interesting. Considering the equations for the generating
functions $B_{0,0}\equiv M_{({\bf 1},1,{\bf 1})}(x)$ of non-colored
Motzkin numbers given in the above examples for rank $r\leq 4$, one
observes that they all have the form $f(x,B_{0,0})=0$ where $f(x,y)$
is a polynomial of degree $2^r$ in $y$ with coefficients in
$\mathbb{Z}[x]$ (and with constant coefficient 1); more precisely,
the coefficient $a_i(x)$ of $y^i$ is a polynomial of degree at most
$i$ in $x$ over the integers $\mathbb{Z}$. Thus, we can write
\begin{equation}
0=1+\sum_{i=1}^{2^r}\sum_{j=0}^{i}a_{i,j}x^j(B_{0,0})^i
\end{equation}
where $a_{i,j}\in \mathbb{Z}$. It would be interesting to find out
whether this representation holds for all ranks (which is what we
expect) or whether it is confined to $r\leq 4$.
\end{rem}

We would like to close this paper by stressing that it is still an
open problem to derive a recursion relation for the Motzkin numbers
of higher rank. As the explicit examples $r=2,3,4$ and the
complicated set of equations for the generating function given in
Theorem \ref{thmm1} show this will be a rather daunting task (see
also the last remark).

\section{Acknowledgments}
The authors would like to thank Helmut Prodinger for sending them
\cite{prod} prior to publication and drawing their attention to
\cite{Band}.

\bigskip
\hrule
\bigskip

\noindent 2000 {\it Mathematics Subject Classification}:
Primary 05A15, 11B37, 11B83.

\noindent \emph{Keywords: } Motzkin number; Catalan number;
recursion relation; generating function.

\end{document}